\newtheorem{theorem}{Theorem}[section]
\newtheorem{lemma}[theorem]{Lemma}
\newtheorem{reduction}[theorem]{Reduction}
\newcommand{\Sym}{\mathop{\textrm{Sym}}}
\renewcommand{\wr}{\mathop{\textrm{wr}}}
\newcommand{\CI}{\mathop{\mathrm{DCI}}}
\newcommand{\Cay}{\mathop{\mathrm{Cay}}}
\newcommand{\Aut}{\mathop{\mathrm{Aut}}}
\newcommand{\Alt}{\mathop{\textrm{Alt}}}
\newcommand{\AGL}{\mathop{\textrm{AGL}}}
\def\norm#1#2{{\bf N}_{{#1}}{{(#2)}}}
\begin{document}
\title[Dihedral DCI-groups: an addendum]
{A comment on: ``Further restrictions on the structure of finite DCI-groups''}

\author[E. Dobson]{Edward Dobson}
\address{Edward Dobson, Department of Mathematics and Statistics,
Mississippi State University,
PO Drawer MA Mississippi State, MS 39762}
\email{dobson@math.msstate.edu}

\author[J. Morris]{Joy Morris}
\address{Joy Morris, Department of Mathematics and Computer Science,
University of Lethbridge,
Lethbridge, AB. T1K 3M4. Canada}
\email{joy@cs.uleth.ca}

\author[P. Spiga]{Pablo Spiga}
\address{Pablo Spiga, Dipartimento di Matematica e Applicazioni,\newline
University of Milano-Bicocca,
 Via Cozzi 55 Milano, MI 20125, Italy}
\email{pablo.spiga@unimib.it}

\thanks{Address correspondence to P. Spiga, E-mail: pablo.spiga@unimib.it\\ The second author is supported in part by the National Science
 and Engineering Research Council of Canada.}

\begin{abstract}
A finite group $R$ is a $\CI$-group if, whenever $S$ and $T$ are subsets of $R$ with the Cayley graphs $\Cay(R,S)$ and $\Cay(R,T)$ isomorphic, there exists an automorphism $\varphi$ of $R$ with $S^\varphi=T$.

The classification of  $\CI$-groups is an open problem in the theory of Cayley graphs and is closely related to the isomorphism problem for graphs. This paper is a contribution towards this classification, as we show that every dihedral group of order $6p$, with $p\geq 5$ prime, is a $\CI$-group. This corrects and completes the proof of~\cite[Theorem~$1.1$]{LiLP} as observed by the reviewer~\cite{marston}.
\end{abstract}
\subjclass[2010]{20B10, 20B25, 05E18}
\keywords{Cayley graph, isomorphism problem, CI-group, dihedral group}

\maketitle
\section{Introduction}\label{sec:intro}
Let $R$ be a finite group and let $S$ be a subset of $R$. The \textit{Cayley digraph} of $R$ with connection set $S$, denoted $\Cay(R,S)$, is the digraph with vertex set $R$ and with $(x,y)$ being an arc if and only if $xy^{-1}\in S$. Now, $\Cay(R,S)$ is said to be a \textit{Cayley isomorphic} digraph, or $\CI$-\textit{graph} for short, if whenever $\Cay(R,S)$ is isomorphic to $\Cay(R,T)$, there exists an automorphism $\varphi$ of $R$ with $S^\varphi=T$. Clearly, $\Cay(R,S)\cong \Cay(R,S^\varphi)$ for every $\varphi\in \Aut(R)$ and hence, loosely speaking, for a $\CI$-graph  $\Cay(R,S)$ deciding when a Cayley digraph over $R$ is isomorphic to  $\Cay(R,S)$ is theoretically and algorithmically elementary; that is, the solving set for $\Cay(R,S)$ is reduced to simply $\Aut(R)$ (for the definition of solving set see for example~\cite{Muz,Muz1}). The group $R$ is a $\CI$-\textit{group} if $\Cay(R,S)$ is a $\CI$-graph for every subset $S$ of $R$. Moreover, $R$ is a $\mathrm{CI}$-group if $\Cay(R,S)$ is a $\CI$-graph for every inverse-closed subset $S$ of $R$. Thus every $\CI$-group is a $\mathrm{CI}$-group.

Throughout this paper, $p$ will always denote a prime number.

In order to obtain new and severe constrains on the structure of a $\CI$-group, the authors of~\cite{LiLP} considered the problem of determining which Frobenius groups $R$ of order $6p$ are $\CI$-groups. They were in fact interested in the more specific case of Frobenius groups of order $6p$ with Frobenius kernel of order $p$; this is clear from their analysis and their proofs, but is not specified in the statement of~\cite[Theorem~$1.1$]{LiLP}. The proof of their theorem as stated is therefore incomplete,
as observed by Conder~\cite{marston}. The aim of this paper is to fix this discrepancy by completing the analysis of which Frobenius groups of order $6p$ are $\CI$-groups, hence completing the proof of ~\cite[Theorem~$1.1$]{LiLP} as the authors stated it.

An elementary computation yields that if $R$ is a Frobenius group of order $6p$ with Frobenius kernel whose order is not $p$, then $R$ is isomorphic to the alternating group on four symbols $\Alt(4)$ (and $p=2$), or to the quasidihedral group $\langle (1,2,3),(4,5,6),(2,3)(5,6)\rangle$ (and $p=3$), or to the dihedral group of order $6p$. A routine computer-assisted computation shows that $\Alt(4)$ is a $\CI$-group and  $\langle (1,2,3),(4,5,6),(2,3)(5,6)\rangle$ is not a $\CI$-group.  Moreover, as is observed in~\cite{marston}, $\langle (1,2,3),(4,5,6),(2,3)(5,6)\rangle$ is a $\mathrm{CI}$-group. Therefore in order to complete the analysis of Frobenius groups of order $6p$, we only need to consider dihedral groups of order $6p$.

\begin{theorem}\label{thrm:main}
Let $p$ be a prime number and let $R$ be the dihedral group of order $6p$. Then $R$ is a $\CI$-group if and only if $p\geq 5$, and $R$ is a $\mathrm{CI}$-group if and only if $p\geq 3$.
\end{theorem}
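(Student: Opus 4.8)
The plan is to reformulate the $\CI$-property in permutation-group language via Babai's criterion, to settle the two exceptional primes by direct computation, and to establish the remaining assertion --- that the dihedral group of order $6p$ is a $\CI$-group for every prime $p\geq 5$ --- through an analysis of the automorphism group of an arbitrary Cayley digraph. For $p=2$, $R$ is the dihedral group of order $12$ and a routine computer computation produces an inverse-closed connection set $S$ for which $\Cay(R,S)$ is not a $\mathrm{CI}$-graph; as every $\CI$-group is a $\mathrm{CI}$-group, $R$ is neither a $\mathrm{CI}$-group nor a $\CI$-group. For $p=3$, $R$ is the dihedral group of order $18$ and a similar computation shows that every undirected Cayley graph on $R$ is a $\mathrm{CI}$-graph while some Cayley digraph on $R$ is not a $\CI$-graph, so $R$ is a $\mathrm{CI}$-group but not a $\CI$-group. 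Together with the reductions recorded before the statement, it remains to prove that for every prime $p\geq 5$ the dihedral group $R$ of order $6p$ is a $\CI$-group.

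Fix $p\geq 5$ and write $R=\langle a,b\mid a^{3p}=b^{2}=1,\ a^{b}=a^{-1}\rangle$; thus $L:=\langle a^{3}\rangle$ is the unique, and hence characteristic, subgroup of order $p$ of $R$, and $R/L\cong\Sym(3)$. By Babai's criterion it suffices to prove that, for every subset $S$ of $R$ and with $\Gamma=\Cay(R,S)$ and $A=\Aut(\Gamma)$, each regular subgroup $R'$ of $A$ isomorphic to $R$ is conjugate to $R$ in $A$; write $L'$ for the characteristic subgroup of order $p$ of $R'$. A standard reduction lets us assume that $\Gamma$ is connected, i.e.\ $\langle S\rangle=R$: otherwise $\langle S\rangle$ is one of $1$, $\mathbb{Z}_{2}$, $\mathbb{Z}_{3}$, $\mathbb{Z}_{p}$, $\mathbb{Z}_{3p}$, $\Sym(3)$ or the dihedral group of order $2p$, each of which is a $\CI$-group (the cyclic cases by Muzychuk's theorem, the dihedral cases by Babai, $\Sym(3)$ by inspection), every automorphism of such a subgroup extends to $R$, and any two proper subgroups of $R$ of the same order are conjugate in $R$, so the conjugacy of $R$ and $R'$ can be assembled from the connected components. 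Hence $A$ is transitive of degree $6p$, and $L$ and $L'$ are semiregular on the vertex set with exactly six orbits each.

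The remaining work is a case analysis on the structure of $A$. If $A$ is primitive, then since it contains the regular dihedral subgroup $R$, the classification of finite primitive permutation groups admitting a regular dihedral subgroup leaves only a short list of possibilities in degree $6p$ --- note $6p$ is not a prime power, so $A$ is not affine --- in each of which $\Gamma$ together with $R$ and $R'$ is analysed directly, typically with $\Gamma$ complete or edgeless and the conjugacy trivial. If $A$ is imprimitive, one works through its block systems. For a block system whose blocks have size prime to $p$, the kernel of the action on blocks is a $\{2,3\}$-group acting on small blocks while the quotient is a transitive group of degree a multiple of $p$; one transfers the conjugacy from the quotient to $A$ using the $\CI$-property of the relevant quotient group --- one of $\Sym(3)$, the dihedral group of order $2p$, or a cyclic group --- together with a direct analysis of the small kernel. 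For a block system whose blocks have size divisible by $p$, one brings in Burnside's theorem on transitive groups of prime degree, applied to the group induced on a block or, after refining to blocks of size exactly $p$, to the quotient of degree $6$, to deduce that $A$ possesses essentially a unique semiregular subgroup of order $p$; consequently $L$ and $L'$ are conjugate in $A$, so after replacing $R'$ by a conjugate we may assume $L=L'$, whence the orbits of the common subgroup $L$ form a block system preserved by the subgroup of $A$ under consideration, the corresponding quotient digraph is a Cayley digraph of $R/L\cong\Sym(3)$ --- a $\CI$-group --- and lifting the resulting conjugacy through the split extension $R=L\rtimes\Sym(3)$ completes the proof.

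The main obstacle is that $R$ and $R'$ need not be compatible with the same block systems of $A$, so no canonical normal subgroup can be quotiented out at the outset; the crucial and most delicate point is to prove that the order-$p$ subgroups $L\leq R$ and $L'\leq R'$ are conjugate in $A$, equivalently that the semiregular subgroups of $A$ of order $p$ form a single conjugacy class. This requires careful control of the Sylow $p$-subgroups of $A$ --- in particular an analysis of the case in which $p$ divides the order of a point stabiliser, where both the structure of $p$-groups acting on $6p$ points and the coprimality of $p$ with $6=|R/L|$ (which fails exactly for the exceptional primes $p=2,3$) are decisive. Once $L=L'$ has been arranged, the descent to the $6$-element quotient is routine, so essentially all the difficulty is concentrated in this conjugacy step.
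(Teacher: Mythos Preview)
Your proposal is a plan rather than a proof: you correctly isolate the conjugacy in $A$ of the order-$p$ subgroups $L\le R$ and $L'\le R'$ as the crux, but you do not prove it --- the appeal to Burnside's theorem is too vague to yield that the semiregular $p$-subgroups of $A$ form a single conjugacy class, and the primitive case is disposed of by invoking a classification and the phrase ``analysed directly''. More seriously, your claim that once $L=L'$ the descent to the $\Sym(3)$ quotient is ``routine'' is not justified and is in fact where real work remains. Even with $L=L'$, the images $RK/K$ and $R'K/K$ (with $K$ the kernel on $L$-orbits) need not be conjugate in $\langle R,R'\rangle K/K$: they can generate a direct product of two regular copies of $\Sym(3)$ inside $\Sym(6)$, and in that situation the conjugating element must be manufactured in the $2$-closure by an orbital argument, not obtained by citing that $\Sym(3)$ is a $\CI$-group. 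Knowing that $\bar R$ and $\bar{R'}$ are conjugate in $\Aut(\bar\Gamma)$ does not by itself produce a conjugating element of $A$; the ``lifting through the split extension $R=L\rtimes\Sym(3)$'' you allude to concerns the structure of $R$, not the passage from $\bar A$ back to $A$.

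The paper's route is quite different and sidesteps both difficulties. It works not with $A=\Aut(\Gamma)$ but with $G=\langle R,R^{\pi}\rangle$ and its $2$-closure; after a Sylow conjugation one has $R_p,R_p^{\pi}\le P$ for a single Sylow $p$-subgroup $P\le G$, and since (for $p\ge 7$) $P$ sits in an elementary abelian Sylow $p$-subgroup of $\Sym(6p)$, the $P$-orbits already form a common block system $\mathcal C$ for $R$ and $R^{\pi}$ --- no prior conjugacy of $L$ and $L'$ is needed. One then shows $G\le \AGL_1(p)\wr\Sym(6)$, pins $G/K$ down to one of two explicit subgroups of $\Sym(6)$, and runs the case analysis not on block systems of $A$ but on the number ($1$, $2$, $3$ or $6$) of classes of an equivalence $\equiv$ on $\mathcal C$ defined by equality of point-stabilisers in $P$. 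In each case an explicit element of $G^{(2)}$ conjugating $R^{\pi}$ to $R$ is written down, using a lemma that lets one restrict kernel elements to $\equiv$-classes. Your ``$L=L'$'' situation is exactly Case~I ($|P|=p$), and already there the subcase $G/K\cong\Sym(3)\times\Sym(3)$ requires a genuine $2$-closure computation to exhibit $(5,6)\in G^{(2)}$ --- precisely the step your outline waves away as routine.
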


The structure of the paper is straightforward. In Section~\ref{smallp}, we consider the case $p\leq 5$. In Section~\ref{basictools}, we provide some preliminary definitions and our main tool. In Section~\ref{proof} we introduce some notation and we divide the proof of Theorem~\ref{thrm:main} into four  cases, which we then study in turn in Sections~\ref{caseI}--\ref{caseIV}.

\section{Small groups: $p\leq 5$}\label{smallp}

\begin{lemma}\label{l:0}
Let $p$ be a prime with $p\leq 5$ and let $R$ be the dihedral group of order $6p$. Then $R$ is a $\CI$-group if and only if $p=5$, and $R$ is a $\mathrm{CI}$-group if and only if $p\neq 2$.
\end{lemma}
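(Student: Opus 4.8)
The plan is to treat the three primes $p\in\{2,3,5\}$ one at a time; for each of them the assertion reduces to a finite verification, which we carry out with the aid of a computer and which is in any case consistent with the known determinations of $\mathrm{CI}$- and $\CI$-groups of small order.

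For the negative assertions we would exhibit explicit certificates of failure. When $p=2$, so that $R\cong D_{12}$, we must show that $R$ is not a $\mathrm{CI}$-group, and hence not a $\CI$-group; for this it suffices to produce an inverse-closed subset $S$ of $R$ together with a subset $T$ satisfying $\Cay(R,S)\cong\Cay(R,T)$ but $S^{\varphi}\neq T$ for every $\varphi\in\Aut(R)$. Since $R$ has only $2^{12}$ subsets, such a pair is located by an exhaustive search (this failure is also well known). When $p=3$, so that $R\cong D_{18}$, the group $R$ turns out to be a $\mathrm{CI}$-group (see below), so any witness against the $\CI$-property must involve a connection set that is \emph{not} inverse-closed; once more we would produce an explicit pair of subsets of $D_{18}$ yielding isomorphic Cayley digraphs that are inequivalent under $\Aut(R)$, by a search through the $2^{18}$ subsets of $R$.

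For the positive assertions we appeal to exhaustive checks. To prove that $D_{18}$ is a $\mathrm{CI}$-group it suffices to consider inverse-closed subsets; since $D_{18}$ has exactly $9$ involutions (its reflections) and $(18-1-9)/2=4$ inverse-pairs of non-involutions, there are only $2^{9}\cdot 2^{4}=2^{13}$ inverse-closed subsets to examine. For each we compute a canonical form of the corresponding Cayley graph and check that the connection sets with a common canonical form lie in a single $\Aut(R)$-orbit; this is immediate. To prove that $D_{30}$ is a $\CI$-group one must, a priori, examine all $2^{30}$ subsets of $R$. Since $|\Aut(D_{30})|=15\cdot 8=120$, there are about $9\times 10^{6}$ orbits of subsets under $\Aut(R)$; we would enumerate one representative of each orbit, compute a canonical form of its Cayley digraph, and verify that distinct representatives never share a canonical form. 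As every $\CI$-group is a $\mathrm{CI}$-group, this simultaneously shows that $D_{30}$ is a $\mathrm{CI}$-group, and combined with the negative assertions it yields the two dichotomies of Lemma~\ref{l:0}.

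The only genuinely substantial step is the verification that $D_{30}$ is a $\CI$-group, on account of the size $2^{30}$ of the family of subsets. Apart from quotienting by $\Aut(R)$ and substituting canonical-form comparisons for pairwise isomorphism tests, the computation is kept comfortably within reach by first sorting the connection sets according to cheap isomorphism invariants of $\Cay(R,S)$ --- for example $|S|$ (the out-degree) and $|S\cap S^{-1}|$ (the degree of the undirected part) --- so that only connection sets agreeing on all of these ever need to be compared. With these reductions each of the three cases $p\in\{2,3,5\}$ becomes a routine finite computation.
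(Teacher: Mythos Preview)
Your approach is essentially the same as the paper's: both treat Lemma~\ref{l:0} as a finite computer verification, one prime at a time. The only notable difference is that the paper actually writes down the explicit witnesses for the negative cases --- $\Cay(R_2,\{b,a^3\})\cong\Cay(R_2,\{b,a^3b\})$ for $p=2$ (with the observation that $a^3$ is the unique central involution, so no automorphism can map one connection set to the other), and $\Cay(R_3,\{a,a^4,a^6,a^7\})\cong\Cay(R_3,\{a^2,a^5,a^6,a^8\})$ for $p=3$ --- whereas you only describe the search that would locate such witnesses. Since the statement is a lemma rather than a plan, you should record the actual counterexamples; otherwise the argument is sound and matches the paper's.
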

\begin{proof}
The proof follows from a computer computation with the invaluable help of the algebra system \texttt{magma}~\cite{magma}. Let $R_p=\langle a,b\mid a^{3p}=b^2=(ab)^2=1\rangle$ be the dihedral group of order $6p$. Here we simply prove that $R_2$ is not a $\mathrm{CI}$-group and that $R_3$ is not a $\CI$-group.

For $p=2$, the graphs $\Cay(R_2,\{b,a^3\})$ and $\Cay(R_2,\{b,a^3b\})$ are both isomorphic to the disjoint union of three cycles of length $4$. As $a^3$ is the only central involution of $R_2$, there exists no automorphism of $R_2$ mapping $\{b,a^3\}$ to $\{b,a^3b\}$.

For $p=3$, the digraphs $\Cay(R_3,\{a,a^4,a^6,a^7\})$ and $\Cay(R_3,\{a^2,a^5,a^6,a^8\})$ are isomorphic and a computation shows that there exists no automorphism of $R_3$ mapping $\{a,a^4,a^6,a^7\}$ to $\{a^2,a^5,a^6,a^8\}$.
\end{proof}

Given that the (di)graphs we described in this proof are not connected, it is worth observing that a group $R$ is a $\mathrm{CI}$-group if and only if every pair of connected isomorphic Cayley graphs on $R$ are isomorphic via an automorphism of $R$. This is because the complement of a disconnected graph is always connected, and the property of being a $\mathrm{CI}$-graph is preserved under taking complements. A similar observation also applies to $\CI$-groups.

In view of Lemma~\ref{l:0} for the rest of this paper we may assume that $p\geq 7$.

\section{Some  basic results}\label{basictools}
Babai~\cite{babai} has proved a very useful criterion for determining when a finite group $R$ is a $\CI$-group and, more generally, when $\Cay(R,S)$ is a $\CI$-graph.

\begin{lemma}\label{lemma}Let $R$ be a finite group and let $S$ be a subset of $R$. Then $\Cay(R,S)$ is a $\CI$-graph if and only if $\Aut(\Cay(R,S))$ contains a unique conjugacy class of regular subgroups isomorphic to $R$.
\end{lemma}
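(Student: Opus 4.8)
The plan is to translate the statement into permutation-group language on the vertex set $\Omega=R$, with permutations acting on the right and composed left to right, in keeping with the notation $S^{\varphi}$ used above. Let $R_{\mathrm{right}}\leq\Sym(\Omega)$ be the image of $R$ under its right regular representation; it is a regular subgroup of $\Sym(\Omega)$ isomorphic to $R$, and $R_{\mathrm{right}}\leq\Aut(\Cay(R,S))$ for every $S\subseteq R$. For $\sigma\in\Sym(\Omega)$ and a digraph $\Gamma$ on $\Omega$, let $\Gamma^{\sigma}$ be the digraph with $(x,y)$ an arc exactly when $(x^{\sigma^{-1}},y^{\sigma^{-1}})$ is an arc of $\Gamma$, so that $\sigma\colon\Gamma\to\Gamma^{\sigma}$ is an isomorphism and $\Aut(\Gamma^{\sigma})=\sigma^{-1}\Aut(\Gamma)\sigma$. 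I would invoke three standard facts: (i) a digraph on $\Omega$ equals $\Cay(R,T)$ for some $T\subseteq R$ if and only if it is invariant under $R_{\mathrm{right}}$; (ii) the normalizer $N_{\Sym(\Omega)}(R_{\mathrm{right}})$ is the holomorph $R_{\mathrm{right}}\rtimes\Aut(R)$, with $\Aut(R)$ realized as the stabilizer of the identity $1\in\Omega$, so a permutation that normalizes $R_{\mathrm{right}}$ and fixes $1$ lies in $\Aut(R)$, and moreover $\Cay(R,S)^{\varphi}=\Cay(R,S^{\varphi})$ for every $\varphi\in\Aut(R)$; and (iii) any regular subgroup $H\leq\Sym(\Omega)$ with $H\cong R$ is conjugate in $\Sym(\Omega)$ to $R_{\mathrm{right}}$ --- concretely, if $\theta\colon R\to H$ is an isomorphism then $\sigma\colon r\mapsto 1^{\theta(r)}$ is a permutation of $\Omega$ with $\sigma^{-1}R_{\mathrm{right}}\sigma=H$.

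\emph{Forward implication.} Suppose $\Cay(R,S)$ is a $\CI$-graph and let $H$ be a regular subgroup of $\Aut(\Cay(R,S))$ with $H\cong R$. By (iii) choose $\sigma\in\Sym(\Omega)$ with $R_{\mathrm{right}}=\sigma^{-1}H\sigma$; then $R_{\mathrm{right}}\leq\sigma^{-1}\Aut(\Cay(R,S))\sigma=\Aut(\Cay(R,S)^{\sigma})$, so by (i) $\Cay(R,S)^{\sigma}=\Cay(R,T)$ for some $T\subseteq R$, and $\sigma$ exhibits $\Cay(R,S)\cong\Cay(R,T)$. The $\CI$-property yields $\varphi\in\Aut(R)$ with $S^{\varphi}=T$, hence $\Cay(R,S)^{\varphi}=\Cay(R,S^{\varphi})=\Cay(R,T)=\Cay(R,S)^{\sigma}$ by (ii), and therefore $g:=\varphi\sigma^{-1}\in\Aut(\Cay(R,S))$. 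Since $\varphi$ normalizes $R_{\mathrm{right}}$ we get $g^{-1}R_{\mathrm{right}}g=\sigma\varphi^{-1}R_{\mathrm{right}}\varphi\sigma^{-1}=\sigma R_{\mathrm{right}}\sigma^{-1}=H$. Thus every regular subgroup of $\Aut(\Cay(R,S))$ isomorphic to $R$ is $\Aut(\Cay(R,S))$-conjugate to $R_{\mathrm{right}}$; as $R_{\mathrm{right}}$ is itself one such subgroup, they form a single conjugacy class.

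\emph{Converse.} Suppose $\Aut(\Cay(R,S))$ has a unique conjugacy class of regular subgroups isomorphic to $R$, and let $T\subseteq R$ with $\Cay(R,S)\cong\Cay(R,T)$, say via $\sigma\colon\Cay(R,S)\to\Cay(R,T)$. From $\Aut(\Cay(R,T))=\sigma^{-1}\Aut(\Cay(R,S))\sigma$ and $R_{\mathrm{right}}\leq\Aut(\Cay(R,T))$ we get that $\sigma R_{\mathrm{right}}\sigma^{-1}$ is a regular subgroup of $\Aut(\Cay(R,S))$ isomorphic to $R$; by hypothesis there is $\tau\in\Aut(\Cay(R,S))$ with $\tau^{-1}\bigl(\sigma R_{\mathrm{right}}\sigma^{-1}\bigr)\tau=R_{\mathrm{right}}$, so $\nu:=\sigma^{-1}\tau$ normalizes $R_{\mathrm{right}}$. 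As the composite of $\sigma^{-1}\colon\Cay(R,T)\to\Cay(R,S)$ with an automorphism of $\Cay(R,S)$, the permutation $\nu$ is an isomorphism $\Cay(R,T)\to\Cay(R,S)$; post-composing it with the translation in $R_{\mathrm{right}}$ that sends $1^{\nu}$ back to $1$ (which keeps it an isomorphism, since $R_{\mathrm{right}}\leq\Aut(\Cay(R,S))$) we may assume $\nu$ fixes $1$, so by (ii) $\nu=\varphi$ for some $\varphi\in\Aut(R)$. Then $\Cay(R,T^{\varphi})=\Cay(R,T)^{\varphi}=\Cay(R,S)$, whence $T^{\varphi}=S$ (two Cayley digraphs of $R$ with the same arc set have the same connection set); so $S^{\varphi^{-1}}=T$ with $\varphi^{-1}\in\Aut(R)$, as needed.

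The genuinely structural input is fact (ii): identifying $N_{\Sym(\Omega)}(R_{\mathrm{right}})$ with the holomorph of $R$ is exactly what forces the mediating bijection between the two Cayley digraphs to be an honest group automorphism, and this is the crux of the argument. Everything else --- facts (i) and (iii), the behaviour of $\Aut$ under conjugation, and the bookkeeping of the left/right action conventions --- is routine.
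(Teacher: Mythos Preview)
Your proof is correct and is precisely the standard argument for Babai's criterion. Note, however, that the paper does not supply its own proof of this lemma: it merely states the result and attributes it to Babai~\cite{babai}, so there is no proof in the paper to compare your approach against. What you have written is essentially the classical proof as in Babai's original paper, with the holomorph identification $N_{\Sym(\Omega)}(R_{\mathrm{right}})=R_{\mathrm{right}}\rtimes\Aut(R)$ doing the real work.
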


Let $\Omega$ be a finite set and let $G$ be a permutation group on $\Omega$. The $2$-\textit{closure} of $G$, denoted $G^{(2)}$, is the set $$\{\pi\in \Sym(\Omega)\mid \forall (\omega,\omega')\in \Omega^2, \textrm{there exists } g_{\omega\omega'}\in G \textrm{ with }(\omega,\omega')^\pi=(\omega,\omega')^{g_{\omega\omega'}}\},$$
where $\Sym(\Omega)$ is the symmetric group on $\Omega$. Observe that in the definition of $G^{(2)}$, the element $g_{\omega\omega'}$ of $G$ may depend upon the ordered pair $(\omega,\omega')$. The group $G$ is said to be $2$-\textit{closed} if $G=G^{(2)}$.

It is easy to verify that $G^{(2)}$ is a subgroup of $\Sym(\Omega)$ containing $G$ and, in fact, $G^{(2)}$ is the smallest (with respect to inclusion) subgroup of $\Sym(\Omega)$ preserving every orbital digraph of $G$. It follows that the automorphism group of a graph is $2$-closed.
Therefore Lemma~\ref{lemma} immediately yields:

\begin{lemma}\label{babaistrong}
Let $R$ be a finite group and let $R_r$ be the right regular representation of $R$ in $\Sym(R)$. If, for every $\pi\in \Sym(R)$, the groups $R_r$ and $R_r^\pi$ are conjugate in $\langle R_r,R_r^\pi\rangle^{(2)}$, then $R$ is a $\CI$-group.
\end{lemma}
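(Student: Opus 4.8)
The plan is to obtain Lemma~\ref{babaistrong} as a soft consequence of Babai's criterion (Lemma~\ref{lemma}) together with the two facts about $2$-closures recorded above: that $2$-closure is monotone with respect to inclusion, and that the automorphism group of a digraph is $2$-closed. Fix an arbitrary subset $S\subseteq R$ and set $A=\Aut(\Cay(R,S))$. By Lemma~\ref{lemma} it is enough to prove that $A$ contains a single conjugacy class of regular subgroups isomorphic to $R$. Since the right regular representation gives $R_r\leq A$, what I need to show is that every regular subgroup $H\leq A$ with $H\cong R$ is conjugate to $R_r$ by an element of $A$.

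First I would record the elementary fact that any two regular subgroups of $\Sym(R)$ abstractly isomorphic to $R$ are conjugate in $\Sym(R)$: an orbit map identifies the point set $R$ with $H$ and converts the action of $H$ into the right regular representation, so there exists $\pi\in\Sym(R)$ with $H=R_r^\pi$. The hypothesis of the lemma, applied to this particular $\pi$, then tells us that $R_r$ and $R_r^\pi=H$ are already conjugate inside the $2$-closure $\langle R_r,R_r^\pi\rangle^{(2)}=\langle R_r,H\rangle^{(2)}$.

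It remains to observe that this $2$-closure lies in $A$. Indeed $R_r,H\leq A$ gives $\langle R_r,H\rangle\leq A$, and hence, by monotonicity of $2$-closure together with the $2$-closedness of $A$, we get $\langle R_r,H\rangle^{(2)}\leq A^{(2)}=A$; equivalently, the arc set of $\Cay(R,S)$ is a union of orbital digraphs of $\langle R_r,H\rangle$ and so is preserved by $\langle R_r,H\rangle^{(2)}$. Therefore $R_r$ and $H$ are conjugate in $A$, and since $S$ was arbitrary, $R$ is a $\CI$-group. I do not expect a genuine obstacle here; the only points needing a moment's care are the reduction of an arbitrary regular copy of $R$ in $\Sym(R)$ to an honest $\Sym(R)$-conjugate $R_r^\pi$ of the fixed representation (so that the quantifier over $\pi\in\Sym(R)$ in the hypothesis actually applies to it), and the bookkeeping that passing to the $2$-closure does not take us outside $A$.
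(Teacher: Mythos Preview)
Your argument is correct and follows essentially the same route as the paper's own proof: invoke Babai's criterion, write an arbitrary regular copy of $R$ in $A$ as $R_r^\pi$ for some $\pi\in\Sym(R)$, apply the hypothesis to get conjugacy in $\langle R_r,R_r^\pi\rangle^{(2)}$, and then use $\langle R_r,R_r^\pi\rangle^{(2)}\leq A^{(2)}=A$.
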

\begin{proof}
Let $S$ be a subset of $R$, and set $\Gamma:=\Cay(R,S)$ and $A:=\Aut(\Gamma)$. Observe that $R_r\leq A$ and that $A$ is $2$-closed. Let $T$ be a regular subgroup of $A$ isomorphic to $R$. Since $\langle R_r,T\rangle\leq A$, we get $\langle R_r,T\rangle^{(2)}\leq A^{(2)}=A$.

Every regular subgroup of $\Sym(R)$ isomorphic to $R$ is conjugate to $R_r$ and hence $T=R_r^\pi$, for some $\pi\in \Sym(R)$. By hypothesis, $R_r$ and $T$ are conjugate in $\langle R_r,T\rangle^{(2)}$ and so are conjugate in $A$. In particular, $A$ contains a unique conjugacy class of regular subgroups isomorphic to $R$ and Lemma~\ref{lemma} gives that $R$ is a $\CI$-group.
\end{proof}

We will use this formulation of Babai's criterion without comment in our proof of Theorem~\ref{thrm:main}.

\section{Notation and preliminary reductions}\label{proof}
Multiplication of permutations is on the right, so $\sigma\tau$ is calculated by first applying $\sigma$, and then $\tau$.  For the rest of this paper we let $R$ be the dihedral group of order $6p$ and we let $\Omega:=\{1,\ldots,6p\}$. Using Lemma~\ref{l:0}, we may assume that $p\ge 7$ in the proof of Theorem~\ref{thrm:main}. In what follows, we identify $R$ with a regular subgroup of $\Sym(\Omega)$ isomorphic to $R$, that is, $R$ acts regularly on $\Omega$. Let $\pi\in \Sym(\Omega)$ and set $G:=\langle R,R^\pi\rangle$. In view of Lemma~\ref{babaistrong}, Theorem~\ref{thrm:main} will follow by proving that $R$ is conjugate to $R^\pi$ via an element of $G^{(2)}$.

Let $R_p$ denote the Sylow $p$-subgroup of $R$, let $P$ be a Sylow $p$-subgroup of $G$ with $R_p\leq P$ and let $T$ be a Sylow $p$-subgroup of $\Sym(\Omega)$ with $P\leq T$. From Sylow's theorems, replacing $R^\pi$ by a suitable $G$-conjugate, we may assume that $R_p^\pi\leq P$. Observe that, as $p\geq 7$, the group $T$ is elementary abelian of order $p^6$. Since $R_p$ and $R_p^\pi$ are acting semiregularly, their orbits on $\Omega$ must be equal to the orbits of $T$.

Since $R_p$  is the unique Sylow $p$-subgroup of $R$, we see that $R$ admits a unique system of imprimitivity $\mathcal{C}$  with blocks of size $p$, namely $\mathcal{C}$ consists of the $R_p$-orbits on $\Omega$. Similarly, $R^\pi$ admits a unique system of imprimitivity with blocks of size $p$, namely $\mathcal{C}^\pi$, and the system of imprimitivity $\mathcal{C}^\pi$ consists of the $R_p^\pi$-orbits on $\Omega$. Since each of these is equal to the orbits of $T$ on $\Omega$, we have $\mathcal{C}=\mathcal{C}^\pi$, and $\mathcal{C}$ is $R$- and $R^\pi$-invariant. As $G=\langle R,R^\pi\rangle$, we get that $\mathcal{C}$ is also $G$-invariant. Therefore, $G$ is conjugate to a subgroup of $\Sym(p)\wr\Sym(6)$. Similarly, since $\mathcal{C}$ is $\pi$-invariant, $\pi$ is conjugate to an element in $\Sym(p)\wr\Sym(6)$.

We can use this structure to decompose the set $\Omega$ as $\Delta\times \Lambda$ with $|\Delta|=p$ and $|\Lambda|=6$. We identify $\Omega$ with $\Delta\times \Lambda$, $\Delta$ with $\{1,\ldots,p\}$ and $\Lambda$ with $\{1,\ldots,6\}$.
Write $W:=\Sym(\Delta)\wr \Sym(\Lambda)$ and $B:=\Sym(\Delta)^6$ the base group of $W$. Then for $\sigma\in \Sym(\Lambda)$, $(y_1,\ldots,y_6)\in B$, and $(\delta,\lambda)\in\Delta\times\Lambda$, we have $$(\delta,\lambda)^{\sigma}=(\delta,\lambda^\sigma) \,\textrm{   and   }\, (\delta,\lambda)^{(y_1,\ldots,y_6)}=(\delta^{y_\lambda},\lambda),$$
and $W=\{\sigma(y_1,\ldots,y_6)\mid\sigma\in\Sym(\Lambda),(y_1,\ldots,y_6)\in B\}$. Observe that under this identification the system of imprimitivity $\mathcal{C}$ is $\{\Delta_1,\ldots,\Delta_6\}$ where $\Delta_\lambda=\Delta\times \{\lambda\}$ for every $\lambda\in \Lambda$.

 Let $K$ be the kernel of the action of $G$ on $\mathcal{C}$, that is, $K=B\cap G$. Clearly, $RK/K$ and  $R^\pi K/K$ are regular subgroups of $\Sym(\Lambda)$ isomorphic to $\Sym(3)$.
A direct inspection in $\Sym(\Lambda)$ shows that if $A$ and $B$ are regular subgroups of $\Sym(\Lambda)$ isomorphic to $\Sym(3)$, then either $B$ is conjugate to $A$ via an element of $\langle A,B\rangle$, or $\langle A,B\rangle=A\times B$. Summing up and applying this observation to $G/K$, we obtain the following reduction.
\begin{reduction}\label{reduction1}
{\rm
We have $$G\leq W\quad\textrm{and}\quad \pi\in W,$$
and  (replacing $G$ by a suitable $W$-conjugate) either
\begin{equation}\label{red1eq1}\frac{G}{K}=\frac{RK}{K}=\frac{R^\pi K}{K}=\langle (1,2,3)(4,5,6),(1,4)(2,6)(3,5)\rangle,
\end{equation} or
\begin{eqnarray}\label{red1eq2}
\frac{G}{K}&=&\frac{RK}{K}\times \frac{R^\pi K}{K},\\\nonumber
 RK/K&=&\langle (1,2,3)(4,5,6),(1,4)(2,6)(3,5),\\\nonumber R^\pi K/K&=&\langle(1,2,3)(4,6,5),(1,4)(2,5)(3,6)\rangle.
\end{eqnarray}

A moment's thought gives that in case~\eqref{red1eq1} we may assume that $\pi\in B$ and in case~\eqref{red1eq2} we may assume that $\pi=(5,6)y$ with $y\in B$. Write $\pi:=\sigma(y_1,\ldots,y_6)$ with $\sigma=1$ or $\sigma=(5,6)$ depending on whether case~\eqref{red1eq1} or~\eqref{red1eq2} is satisfied. Set $y:=(y_1,\ldots,y_6)$.}
\end{reduction}

Let $c$ be the cycle $(1,2,\ldots,p)$ of length $p$ of $\Sym(\Delta)$. Set $$r_1:=(c,c,c,c,c,c),\, r_2:=(1,2,3)(4,5,6) \textrm{ and }r_3:=(1,4)(2,6)(3,5).$$ Replacing $G$ by a suitable $W$-conjugate, we may assume that
\begin{equation}\label{eq0}
R_p=\langle r_1\rangle\textrm{  and  }R=\langle r_1,r_2,r_3\rangle.
\end{equation}
Clearly, $\norm {\Sym(\Delta)}{\langle c\rangle}\cong \AGL_1(p)$ and hence $\norm{\Sym(\Delta)}{\langle c\rangle}=\langle c,\alpha\rangle=\langle c\rangle\rtimes \langle\alpha\rangle$, where $\alpha$ is a permutation fixing $1$ and acting by conjugation on $\langle c\rangle$ as an automorphism of order $p-1$.

As $R_p\leq T$, we see that $T$ is generated by $c_1,c_2,\ldots,c_6$ where
$$c_1:=(c,1,1,1,1,1),c_2:=(1,c,1,1,1,1),\ldots,c_6:=(1,1,1,1,1,c).$$
Since $R_p^\pi\leq T$ and since $R_p^\pi$ is semiregular, we obtain $$R_p^\pi=\langle (c^{\ell_1},c^{\ell_2},c^{\ell_3},c^{\ell_4},c^{\ell_5},c^{\ell_6})\rangle,$$ with $\ell_1=1$ and for some $\ell_2,\ldots,\ell_6\in \{1,\ldots,p-1\}$.

Now $r_1^\pi=(c^{y_1},c^{y_2},c^{y_3},c^{y_4},c^{y_5},c^{y_6})\in R_p^\pi$ and hence there exists $\ell\in \{1,\ldots,p-1\}$ with
$c^{y_\lambda}=c^{\ell_\lambda\ell}$, for every $\lambda\in \Lambda$. Thus $y_\lambda\in\norm{\Sym(\Delta)}{\langle c\rangle}=\langle c,\alpha\rangle$ and  $y_\lambda=c^{u_\lambda}\alpha^{v_\lambda}$ for some $u_\lambda\in \{0,\ldots,p-1\}$ and $v_\lambda\in \{0,\ldots,p-2\}$. It follows that
\begin{eqnarray}\label{eq3}
\pi&=&\sigma(c^{u_1}\alpha^{v_1},c^{u_2}v^{\alpha_2},\ldots,c^{u_6}\alpha^{v_6})\in \langle c,\alpha\rangle\wr\Sym(\Lambda),\\\nonumber
G&\leq& \langle c,\alpha\rangle\wr \Sym(\Lambda).
\end{eqnarray}
Now $r_1\in R\leq G$, and hence replacing $\pi$ by $r_1^{-u_1}\pi$, we may assume that $u_1=0$. Furthermore, $(\alpha,\alpha,\alpha,\alpha,\alpha,\alpha)\in \norm{\Sym(\Omega)}R$, and hence replacing $\pi$ by $(\alpha,\ldots,\alpha)^{-v_1}\pi$, we may assume that $v_1=0$.

As $\langle c,\alpha\rangle\wr\Sym(\Lambda)$ has a normal Sylow $p$-subgroup, we get  $P\unlhd G$ and $K/P$ is isomorphic to a subgroup of $\langle\alpha\rangle\times\langle\alpha\rangle\times\langle\alpha\rangle\times\langle\alpha\rangle\times\langle\alpha\rangle\times\langle\alpha\rangle$.

\smallskip

Next we define an equivalence relation $\equiv$ on $\Omega$. We say that $\omega\equiv \omega'$ if $P_{\omega}=P_{\omega'}$. Since $P\unlhd G$, we see that $\equiv$ is $G$-invariant. Moreover, since $P$ is abelian, we get that $P$ acts regularly on each of its orbits and hence $\omega\equiv \omega'$ for every $\omega$ and $\omega'$ in the same $P$-orbit. This shows that $\equiv$ defines a system of imprimitivity $\mathcal{E}$ for $G$ coarser than $\mathcal{C}$. In particular, $\equiv$ consists of either $1$, $2$, $3$ or $6$ equivalence classes.

There is an equivalent definition of $\equiv$. Given  $\omega\in\Delta_\lambda$ and $\omega'\in\Delta_{\lambda'}$, we have $\omega\equiv \omega'$ whenever, for every $\rho\in P$, $\rho\vert_{\Delta_\lambda} = 1$ if and only if $\rho\vert_{\Delta_{\lambda'}} = 1$ (or equivalently, $\rho\vert_{\Delta_\lambda}$ is a $p$-cycle if and only if $\rho\vert_{\Delta_{\lambda'}}$ is a $p$-cycle).

We will use the following lemma repeatedly.

\begin{lemma}\label{tedslemma}
For every $\rho\in K$ and for every $E\in \mathcal{E}$, the permutation $\rho_E:\Omega\to \Omega$, fixing $\Omega\setminus E$ pointwise and acting on $E$ as $\rho$ does, lies in  $G^{(2)}$.
\end{lemma}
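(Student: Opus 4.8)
The plan is to use the combinatorial description of the $2$-closure: a permutation $\tau$ of $\Omega$ lies in $G^{(2)}$ if and only if, for every ordered pair $(\omega,\omega')\in\Omega^2$, there is some $g\in G$ (depending on the pair) with $(\omega,\omega')^\tau=(\omega,\omega')^g$; equivalently, $\tau$ preserves every $G$-orbit on $\Omega^2$. So I fix $\rho\in K$ and $E\in\mathcal E$, and for an arbitrary pair $(\omega,\omega')$ I exhibit $g\in G$ with $(\omega,\omega')^{\rho_E}=(\omega,\omega')^g$. Since $\mathcal C$ refines $\mathcal E$, I may write $E=\bigcup_{\lambda\in I}\Delta_\lambda$ for a suitable $I\subseteq\Lambda$, and since $\rho\in K\le B$ I may write $\rho=(z_1,\dots,z_6)$ with $z_\lambda\in\Sym(\Delta)$; then $\rho_E=(z_1',\dots,z_6')$ with $z_\lambda'=z_\lambda$ for $\lambda\in I$ and $z_\lambda'=1$ otherwise, so in particular $\rho_E$ preserves every block $\Delta_\lambda$. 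Let $\omega\in\Delta_\lambda$ and $\omega'\in\Delta_{\lambda'}$.

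The two easy cases are when $\lambda$ and $\lambda'$ both lie in $I$ or both lie outside $I$. If both lie in $I$, then $(\omega,\omega')^{\rho_E}=(\omega^\rho,\omega'^\rho)=(\omega,\omega')^\rho$ and $g:=\rho\in K\le G$ does the job. If neither lies in $I$, then $\rho_E$ fixes both $\omega$ and $\omega'$ and $g:=1$ does the job.

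The substance is the mixed case, say $\lambda\in I$ and $\lambda'\notin I$ (the reversed case being symmetric). Here $(\omega,\omega')^{\rho_E}=(\omega^\rho,\omega')$, with $\omega^\rho\in\Delta_\lambda$, so I need $g\in G$ fixing $\omega'$ and sending $\omega$ to $\omega^\rho$, and I will find such a $g$ inside the normal Sylow subgroup $P\le G$. Since $P\le T=\langle c_1,\dots,c_6\rangle$, the group $P$ is abelian and its $\lambda$-th coordinate projection $p_\lambda\colon P\to\langle c\rangle$ is surjective, because $p_\lambda(r_1)=c$ and $r_1\in R_p\le P$; hence $\ker p_\lambda$ has index exactly $p$ in $P$, and $\ker p_\lambda$ equals the stabiliser $P_\omega$ for every $\omega\in\Delta_\lambda$. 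By the equivalent description of $\equiv$ recalled above, $\Delta_\lambda$ and $\Delta_{\lambda'}$ lie in the same $\equiv$-class if and only if $\ker p_\lambda=\ker p_{\lambda'}$. Now $E$ is the $\equiv$-class of $\Delta_\lambda$ and does not contain $\Delta_{\lambda'}$, so $\ker p_\lambda\ne\ker p_{\lambda'}$; these two subgroups have the same order $|P|/p$, so neither contains the other, and in particular there is $h\in\ker p_{\lambda'}$ with $p_\lambda(h)\ne 1$. As $\langle c\rangle$ has prime order, $p_\lambda(h)$ generates it, whence $p_\lambda(\ker p_{\lambda'})=\langle c\rangle$. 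Writing $\omega=(\delta,\lambda)$, I pick $g\in\ker p_{\lambda'}\le P\le G$ with $p_\lambda(g)$ equal to the unique power $c^b$ of $c$ satisfying $\delta^{c^b}=\delta^{z_\lambda}$ (unique because the $p$-cycle $c$ makes $\langle c\rangle$ regular on $\Delta$). This $g$ fixes $\Delta_{\lambda'}$ pointwise, so $\omega'^g=\omega'=\omega'^{\rho_E}$, and $\omega^g=(\delta^{c^b},\lambda)=(\delta^{z_\lambda},\lambda)=\omega^\rho=\omega^{\rho_E}$; hence $(\omega,\omega')^g=(\omega,\omega')^{\rho_E}$.

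I expect the mixed case to be the only real obstacle, and within it the key point is the dichotomy that the index-$p$ subgroup $P_{(\Delta_{\lambda'})}$ of $P$ acts on $\Delta_\lambda$ either trivially or as the full cyclic group $\langle c\rangle$ of $p$-cycles, with the trivial alternative occurring exactly when $\Delta_\lambda\equiv\Delta_{\lambda'}$; everything else is bookkeeping with the wreath-product coordinates. I would also flag that the standing hypothesis $p\ge 7$ enters precisely here: it guarantees that a Sylow $p$-subgroup of $\Sym(\Omega)$, and hence $P$, consists of $6$-tuples of powers of the single $p$-cycle $c$, which is what makes each $p_\lambda$ a well-defined surjection onto $\langle c\rangle$ and each $\ker p_\lambda$ of index exactly $p$.
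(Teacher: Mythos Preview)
Your proof is correct. The paper itself does not argue the lemma directly but simply invokes \cite[Lemma~2]{Dobson1995} (noting that the cited result, though stated for graphs, applies to each orbital digraph of $G$ and hence to $G^{(2)}$); your write-up supplies a self-contained verification via the pairwise criterion for $2$-closure, and the key step---that $\ker p_{\lambda'}$ surjects onto $\langle c\rangle$ in the $\lambda$-coordinate precisely when $\Delta_\lambda$ and $\Delta_{\lambda'}$ lie in distinct $\equiv$-classes---is exactly the mechanism behind the cited lemma.
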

\begin{proof}
This is Lemma~$2$ in~\cite{Dobson1995}.  (We remark that~\cite[Lemma 2]{Dobson1995} is only stated for graphs, but the result holds for each orbital digraph of $G$, and hence for $G^{(2)}$.)
\end{proof}

With all of this notation at our disposal we are ready to prove  Theorem~\ref{thrm:main} with a case analysis depending on the number of $\equiv$-equivalence classes.

\section{Case I: $\equiv$ has only one equivalence class}\label{caseI}

Here, $P_\omega=P_{\omega'}$ for every $\omega,\omega'\in\Omega$, hence $P$ acts semiregularly on $\Omega$ and $|P|=p$. It follows that $P=R_p=R_p^\pi$. In particular, $\ell_1=\cdots =\ell_6=1$ and $v_1=\cdots=v_6=0$. Therefore $\pi=\sigma(c^{u_1},c^{u_2},c^{u_3},c^{u_4},c^{u_5},c^{u_6})$ with $u_1=0$.

Suppose that $\sigma=1$. Since $r_2,r_2^\pi\in G$, we have
$$r_2^{-1}(r_2)^\pi=(
c^{-u_3+u_1},
c^{-u_1+u_2},
c^{-u_2+u_3},
c^{-u_6+u_4},
c^{-u_4+u_5},
c^{-u_5+u_6}
)\in P$$
and hence $-u_3+u_1=-u_1+u_2=-u_2+u_3=-u_6+u_4=-u_4+u_5=-u_5+u_6$. This gives $u_1=u_2=u_3=0$ and $u_4=u_5=u_6$. Write $u:=u_4$. A similar computation gives
$$r_3^{-1}(r_3)^\pi=(
c^{-u},
c^{-u},
c^{-u},
c^{u},
c^{u},
c^{u}
)\in P.$$
Thus $u=-u$ and hence $u=0$. Therefore $\pi=1$ and $R^\pi=R$. It follows that $R$ is conjugate to $R^\pi$ via the identity element of $G^{(2)}$.

\smallskip

Suppose that $\sigma=(5,6)$. Since $r_2,r_2^\pi\in G$, we have
$$r_2^{-1}(r_2)^\pi=(4,5,6)(
c^{-u_3+u_1},
c^{-u_1+u_2},
c^{-u_2+u_3},
c^{-u_5+u_4},
c^{-u_6+u_5},
c^{-u_4+u_6}
)\in G$$
and by taking the $3^{\mathrm{rd}}$ power we get $(
c^{3(-u_3+u_1)},
c^{3(-u_1+u_2)},
c^{3(-u_2+u_3)},1,1,1)\in P$.
Thus $3(-u_3+u_1)=3(-u_1+u_2)=3(-u_2+u_3)=0$ and since $u_1=0$, we have $u_1=u_2=u_3=0$. Moreover $$r_2(r_2)^{\pi}=(1,3,2)(1,
1,1,
c^{-u_5+u_4},
c^{-u_6+u_5},
c^{-u_4+u_6}
)\in G$$ and  by taking the $3^{\mathrm{rd}}$ power we get $(1,1,1,
c^{3(-u_5+u_4)},
c^{3(-u_6+u_5)},
c^{3(-u_4+u_6)})\in P$. Thus $3(-u_5+u_4)=3(-u_6+u_5)=3(-u_4+u_6)=0$ and hence $u_4=u_5=u_6$. Write $u:=u_4$.
Now $$r_3^{-1}(r_3)^\pi=(2,3)(5,6)(
c^{-u},
c^{-u},
c^{-u},
c^{u},
c^{u},
c^{u}
)\in G$$
and by taking the $2^{\mathrm{nd}}$ power we get $(
c^{-2u},
c^{-2u},c^{-2u},
c^{2u},c^{2u},c^{2u})\in P$. Thus $2u=-2u$, and hence $u=0$. It follows that $\pi=\sigma=(5,6)$ and
$$G=\langle R,R^\pi\rangle=\langle r_1,(1,2,3)(4,5,6),(1,4)(2,6)(3,5),(1,2,3)(4,6,5),(1,4)(2,5)(3,6)\rangle.$$

We claim that $\pi\in G^{(2)}$, from which the proof of this case follows. First observe that $(1,2,3)(4,5,6)(1,2,3)(4,6,5)=(1,3,2) \in G$. Also $r_3^{-1}r_3^\pi=(2,3)(5,6)\in G$, and hence (conjugating by the elements of $\langle(1,3,2)\rangle$), we see that $(1,2)(5,6)$ and $(1,3)(5,6)$ belong to $G$. Next, let $\omega=(\delta,\lambda)$ and $\omega'=(\delta',\lambda')$ be in $\Omega$. If $\lambda,\lambda'\notin\{5,6\}$, then $(\omega,\omega')^{\pi}=(\omega,\omega')^{g_{\omega\omega'}}$ with $g_{\omega\omega'}=1$. If $\lambda,\lambda'\in \{5,6\}$, then $(\omega,\omega')^{\pi}=(\omega,\omega')^{g_{\omega\omega'}}$ with $g_{\omega\omega'}=(1,2)(5,6)$. Finally, suppose that only one of $\lambda,\lambda'$ lies in $\{5,6\}$. Let $\lambda''$ be the element of $\{\lambda,\lambda'\}\cap\{1,2,3,4\}$ and let $g_{\omega\omega'}$ be in $\{(1,2)(5,6),(1,3)(5,6),(2,3)(5,6)\}$ fixing the block $\Delta_{\lambda''}$ pointwise. Then  $(\omega,\omega')^{\pi}=(\omega,\omega')^{g_{\omega\omega'}}$.

\section{Case II: $\equiv$ has six  equivalence classes}\label{caseII}

Since $\equiv$ has six equivalence classes, for every two distinct $\lambda,\lambda'\in\Lambda$, there exists an element $q\in P$ with $q$ fixing $\Delta_{\lambda}$ pointwise and acting as the cycle $c$ on $\Delta_{\lambda'}$. From this it  follows that $P^{(2)}=T$. Next, from $T\leq G^{(2)}$, it follows that if $\gamma:\Omega\to\Omega$ is a permutation with the property that for each $\lambda \in \Lambda$, we have
\begin{itemize}
\item $\Delta_{\lambda}^\gamma=\Delta_\lambda$ and \item $\gamma\vert_{\Delta_{\lambda}}=g_\lambda\vert_{\Delta_{\lambda}}$ for some $g_\lambda\in G$ fixing $\Delta_\lambda$ setwise,
\end{itemize}
 then $\gamma \in G^{(2)}$.

As $ T=P^{(2)}\leq  G^{(2)}$, replacing $\pi$ by $g^{-1}\pi$ for a suitable $g\in T$, we may assume that $u_1=u_2=\cdots=u_6=0$.

For $2 \le \lambda \le 6$, let $g_\lambda$ be the element of $R$ that maps $(1,1)$ to $(1,\lambda)$ (so $g_2=r_2$, etc.). Define $\gamma:\Omega\to \Omega$ by $\gamma\vert_{\Delta_1}={\rm id}\vert_{\Delta_1},$ and for $2 \le \lambda \le 6$, $$\gamma\vert_{\Delta_\lambda}=\left((g_{\lambda^\sigma}^\pi)^{-1}g_\lambda\right)\vert_{\Delta_\lambda}.$$ By the observations we made in the first paragraph of this case, $\gamma \in G^{(2)}$.
Careful computations show that $(r_1^\pi)^{\gamma}=r_1$. Thus, $(R_p^\pi)^\gamma=R_p$. We now see that after conjugating $R^\pi$ by $\gamma$ we are in Case I and can complete the proof as before.

\section{Case III: $\equiv$ has two equivalence classes}\label{caseIII}

The $\equiv$-equivalence classes are blocks of imprimitivity for $G$ of size $3p$ and are a union of $P$-orbits. The only system of imprimitivity for $G/K$  with blocks of size $3$ is $\{\{1,2,3\},\{4,5,6\}\}$. Therefore the two $\equiv$-equivalence classes are $\Delta_1\cup\Delta_2\cup\Delta_3$ and $\Delta_4\cup\Delta_5\cup\Delta_6$.  By Lemma \ref{tedslemma} applied to $\rho = r_1$, $(c,c,c,1,1,1),(1,1,1,c,c,c)\in G^{(2)}$.


Replacing $\pi$ by $g^{-1}\pi$ for a suitable $g\in G^{(2)}$, we may assume that $u_4=0$.
As $R_p^\pi\leq P$, we get $\ell_1=\ell_2=\ell_3$ and $\ell_4=\ell_5=\ell_6$. It follows that $v_1=v_2=v_3=0$ and $v_4=v_5=v_6$. Write $\beta:=\alpha^{v_4}$. Therefore $\pi=\sigma(1,c^{u_2},c^{u_3},\beta,c^{u_5}\beta,c^{u_6}\beta)$.

Suppose that $\sigma=1$. We have
$$r_2^{-1}(r_2)^\pi=(
c^{-u_3},
c^{u_2},
c^{-u_2+u_3},
\beta^{-1}c^{-u_6}\beta,
\beta^{-1}c^{u_5}\beta,
\beta^{-1}c^{-u_5+u_6}\beta
)\in P$$
and hence $-u_3=u_2=-u_2+u_3$ and $-u_6=u_5=-u_5+u_6$. This gives $u_2=u_3=0$ and $u_5=u_6=0$, that is, $\pi=(1,1,1,\beta,\beta,\beta)$. A similar computation gives
$$r_3^{-1}(r_3)^\pi=(
\beta^{-1},
\beta^{-1},
\beta^{-1},
\beta,
\beta,
\beta
)\in K.$$
Applying Lemma~\ref{tedslemma} with $E:=\Delta_4\cup\Delta_5\cup\Delta_6$ and  $\rho:=r_3^{-1}(r_3)^\pi$, we get $(1,1,1,\beta,\beta,\beta)\in G^{(2)}$, that is, $\pi\in G^{(2)}$, from which the proof follows.

Suppose that $\sigma=(5,6)$.  Since $r_2,r_2^\pi\in G$, we have
$$r_2^{-1}(r_2)^\pi=(4,5,6)(
c^{-u_3},
c^{u_2},
c^{-u_2+u_3},
\beta^{-1}c^{-u_5}\beta,
\beta^{-1}c^{-u_6+u_5}\beta,
\beta^{-1}c^{u_6}\beta
)\in G$$
and by taking the $3^{\mathrm{rd}}$ power we get $(
c^{-3u_3},
c^{3u_2},
c^{3(-u_2+u_3)},1,1,1)\in P$.
Thus $-3u_3=3u_2=3(-u_2+u_3)$ and hence $u_1=u_2=u_3=0$. Moreover $$r_2(r_2)^{\pi}=(1,3,2)(1,
1,1,
\beta^{-1}c^{-u_5}\beta,
\beta^{-1}c^{-u_6+u_5}\beta,
\beta^{-1}c^{u_6}\beta
)\in G$$ and  by taking the $3^{\mathrm{rd}}$ power we get $(1,1,1,
\beta^{-1}c^{-3u_5}\beta,
\beta^{-1}c^{3(-u_6+u_5)}\beta,
\beta^{-1}c^{3u_6}\beta)\in P$. Thus $-3u_5=3(-u_6+u_5)=3u_6$ and hence $u_4=u_5=u_6=0$. Thus $\pi=(5,6)(1,1,1,\beta,\beta,\beta)$ and
$r_2^{-1}r_2^\pi=(4,6,5)\in G$. This gives $\langle (1,2,3),(4,5,6)\rangle\leq G$.

Now $$r_3^{-1}(r_3)^\pi=(2,3)(5,6)(
\beta^{-1},
\beta^{-1},
\beta^{-1},
\beta,
\beta,
\beta
)\in G.$$
Call this element $\hat{g}_1$. As $(1,2,3)\in G$, we have $$\hat{g}_2:=\hat{g}_1^{(1,2,3)}=(1,3)(5,6)(
\beta^{-1},
\beta^{-1},
\beta^{-1},
\beta,
\beta,
\beta
)\in G$$ and $$\hat{g}_3:=\hat{g}_1^{(1,3,2)}=(1,2)(5,6)(
\beta^{-1},
\beta^{-1},
\beta^{-1},
\beta,
\beta,
\beta
)\in G.$$

We claim that $\pi\in G^{(2)}$, from which the proof of this case immediately follows.  Let $\omega=(\delta,\lambda)$ and $\omega'=(\delta',\lambda')$ be in $\Omega$. If $\lambda,\lambda'\in\{1,2,3\}$, then $(\omega,\omega')^{\pi}=(\omega,\omega')^{g_{\omega\omega'}}$ with $g_{\omega\omega'}=1$. If $\lambda,\lambda'\in\{4,5,6\}$, then $(\omega,\omega')^{\pi}=(\omega,\omega')^{g_{\omega\omega'}}$ with $g_{\omega\omega'}=\hat{g}_1$.  Finally, suppose that only one of $\lambda,\lambda'$ lies in  $\{1,2,3\}$. Without loss of generality we may assume that $\lambda\in \{1,2,3\}$ and $\lambda'\in\{4,5,6\}$. Thus $\omega^\pi=(\delta,\lambda)^\pi=(\delta,\lambda)$ and $\omega'^\pi=(\delta',\lambda')^\pi=(\delta'^\beta,\lambda'^{(5,6)})$. Since $\langle c\rangle$ is transitive on $\Delta$, there exists $x\in \langle c\rangle$ with $\delta^x=\delta^{\beta^{-1}}$. Set $g_{\omega\omega'}:=\hat{g}_{\lambda}(x,x,x,1,1,1)^{-1}$ and observe that $g_{\omega\omega'}\in G$. By construction, we have $(\omega,\omega')^{\pi}=(\omega,\omega')^{g_{\omega\omega'}}$.

\section{Case IV: $\equiv$ has three equivalence class}\label{caseIV}

Observe that the $\equiv$-equivalence classes are blocks of imprimitivity for $G$ of size $2p$ and are union of $P$-orbits. In case~\eqref{red1eq2} of Reduction~\ref{reduction1}, the group $G/K$ has no system of imprimitivity with blocks of size $2$ and hence this case cannot arise. Therefore only case~(\ref{red1eq1}) can happen, that is, $\sigma=1$.

The group $G/K\cong\langle (1,2,3)(4,5,6),(1,4)(2,6)(3,5)\rangle$ has three subgroups of order $2$ and hence $G/K$ has three systems of imprimitivity   with blocks of size $2$, namely  $\{\{1,4\},\{2,6\},\{3,5\}\}$, $\{\{1,5\},\{2,4\},\{3,6\}\}$ and $\{\{1,6\},\{2,5\},\{3,4\}\}$. Without loss of generality we may assume that the three $\equiv$-equivalence classes are $\Delta_1\cup\Delta_4$, $\Delta_2\cup\Delta_6$ and $\Delta_3\cup\Delta_5$.

Applying Lemma~\ref{tedslemma} with $\rho:=r_1$ and with $E\in \{\Delta_1\cup\Delta_4,\Delta_2\cup\Delta_6,\Delta_3\cup\Delta_5\}$, we get
$$\hat{P} := \langle (c,1,1,c,1,1),(1,c,1,1,1,c),(1,1,c,1,c,1)\rangle \le G^{(2)}.$$ \noindent Replacing $\pi$ by $g^{-1}\pi$ for a suitable $g\in \hat{P}$, we may assume that $u_2=u_3=0$. Furthermore,
as $R_p^\pi\leq P$, we get $\ell_1=\ell_4$, $\ell_2=\ell_6$ and $\ell_3=\ell_5$. It follows that $v_1=v_4=0$ and $v_2=v_6$ and $v_3=v_5$. Write $\beta:=\alpha^{v_2}$ and $\gamma:=\alpha^{v_3}$. Therefore $\pi=(1,\beta,\gamma, c^{u_4} ,c^{u_5}\gamma,c^{u_6}\beta)$.

We have
$$r_3^{-1}(r_3)^\pi=(c^{-u_4},\beta^{-1}c^{-u_6}\beta,\gamma^{-1}c^{-u_5}\gamma,c^{u_4},\gamma^{-1}c^{u_5}\gamma,\beta^{-1}c^{u_6}\beta)\in P$$
and hence $-u_4=u_4$, $-u_5=u_5$ and $-u_6=u_6$. Thus $u_4=u_5=u_6=0$ and $\pi=(1,\beta,\gamma,1,\gamma,\beta)$. Similarly, we have
$$r_2^{-1}(r_2)^\pi=(\gamma^{-1},\beta,\beta^{-1}\gamma,\beta^{-1},\gamma,\gamma^{-1}\beta)\in K.$$
Call this element $g$. As $\Delta_1\cup\Delta_4$ is a $\equiv$-equivalence class, $\gamma^{-1} = \beta^{-1}$ and hence $\pi = (1,\beta,\beta,1,\beta,\beta)$ and $g = (\beta^{-1},\beta,1,\beta^{-1},\beta,1)$.  Applying Lemma~\ref{tedslemma} with $\rho:=g$ and $E:=\Delta_2\cup\Delta_5$, we get $g':=(1,\beta,1,1,\beta,1)\in G^{(2)}$. Thus $g'':=(g')^{r_2}=(1,1,\beta,1,1,\beta)\in G^{(2)}$ and $\pi =g'g''\in G^{(2)}$, from which the proof follows.

\thebibliography{10}
\bibitem{babai} L.~Babai, Isomorphism problem for a class of point-symmetric structures, \textit{Acta Math. Acad. Sci.
Hungar.} \textbf{29} (1977), 329--336.

\bibitem{magma}W.~Bosma, J.~Cannon, C.~Playoust, The Magma algebra system. I. The user language, \textit{J. Symbolic Comput.} \textbf{24} (1997), 235--265.

\bibitem{marston}M.~Conder, math review
\href{http://www.ams.org/mathscinet/search/publdoc.html?arg3=&co4=AND&co5=AND&co6=AND&co7=AND&dr=all&pg4=AUCN&pg5=TI&pg6=PC&pg7=MR&pg8=ET&r=1&review_format=html&s4=&s5=&s6=&s7=MR2335710&s8=All&vfpref=html&yearRangeFirst=&yearRangeSecond=&yrop=eq}{MR2335710}.

\bibitem{Dobson1995} E.~Dobson, Isomorphism problem for Cayley graphs of ${\mathbb Z}\sp 3\sb
	p$, \textit{Discrete Math.} \textbf{147} (1995), 87--94.


\bibitem{LiLP}C.~H.~Li,~Z.~P.~Lu, P.~Palfy, Further restrictions on the structure of finite CI-groups, \textit{J. Algebr. Comb. }\textbf{26} (2007), 161--181.

\bibitem{Muz}M.~Muzychuk, On the isomorphism problem for cyclic combinatorial objects, \textit{Discrete Math.} \textbf{197/198} (1999), 589--606.
\bibitem{Muz1}M.~Muzychuk, A solution of the isomorphism problem for circulant graphs, \textit{Proc. London Math. Soc. }\textbf{88} (2004), 1--41.
\end{document}